
\documentclass{amsart}
\usepackage{amssymb}
\usepackage{amsmath}
\usepackage{amsfonts}

\setcounter{MaxMatrixCols}{10}

\newtheorem{theorem}{Theorem}
\theoremstyle{plain}

\newtheorem{definition}{Definition}

\newtheorem{lemma}{Lemma}

\newtheorem{proposition}{Proposition}

\numberwithin{equation}{section}
\input{tcilatex}

\begin{document}
\title[On uniqueness of solutions of Navier-Stokes equations]{On uniqueness
of weak solutions of incompressible Navier-Stokes equations in 3-dimensional
case}
\author{Kamal N. Soltanov}
\address{{\small Institute of Mathematics and Mechanics National Academy of
Sciences of Azerbaijan, AZERBAIJAN; Department of Mathematics, Faculty of
Sciences, Hacettepe University, Ankara, TURKEY}}
\email{soltanov@hacettepe.edu.tr ; sultan\_kamal@hotmail.com}
\urladdr{http://www.mat.hacettepe.edu.tr/personel/akademik/ksoltanov/index.html}
\subjclass[2010]{Primary 35K55, 35K61, 35D30, 35Q30; Secondary 76D03, 76N10}
\date{}
\keywords{3D-Navier-Stokes Equations, Uniqueness, Solvability}

\begin{abstract}
Here we investigate 3-dimensional Navier-Stokes Equations in the
incompressible case with use of different approach and we prove the
uniqueness of the weak solutions for the data from the space, which is dense
in usual space of data. Moreover we study the solvability and uniqueness of
the weak solutions of problems associated with investigation of the main
problem.
\end{abstract}

\maketitle

\section{\label{Sec_1}Introduction}

In this article we investigate Navier-Stokes equations in the incompressible
case, i.e. we consider the following system of equations: 
\begin{equation}
\frac{\partial u_{i}}{\partial t}-\nu \Delta u_{i}+\underset{j=1}{\overset{d}{\sum }}u_{j}\frac{\partial u_{i}}{\partial x_{j}}+\frac{\partial p}{\partial x_{i}}=h_{i},\quad i=\overline{1,d},
\label{1}
\end{equation}%
\begin{equation}
\func{div}u=\underset{i=1}{\overset{d}{\sum }}\frac{\partial u_{i}}{\partial x_{i}}=0,\quad x\in \Omega \subset R^{d},t>0\quad ,
\label{2}
\end{equation}%
\begin{equation}
u\left( 0,x\right) =u_{0}\left( x\right) ,\quad x\in \Omega ;\quad u\left\vert \ _{\left( 0,T\right) \times \partial \Omega }\right. =0
\label{3}
\end{equation}%
where $\Omega \subset R^{d}$ is a bounded domain with sufficiently smooth
boundary $\partial \Omega $, $T>0$ is a positive number. As it is well known
Navier-Stokes equations describe the motion of a fluid in $R^{d}$ ($d=2$ or $%
3$). These equations are to be solved for an unknown velocity vector $%
u(x,t)=\left\{ u_{i}(x,t)\right\} _{1}^{d}\in R^{d}$ and pressure $p(x,t)\in
R$, defined for position $x\in R^{d}$ and time $t\geq 0$, $h_{i}(x,t)$ are
the components of a given, externally applied force (e.g. gravity), $\nu $
is a positive coefficient (the viscosity), $u_{0}\left( x\right) \in R^{d}$
is a sufficiently smooth vector function (vector field).

As known in \cite{Ler1} is shown (see, also, \cite{Lad1}, \cite{MajBer}, 
\cite{Con1}, \cite{Fef1}, \cite{Lio1}) that the Navier--Stokes equations (%
\ref{1}), (\ref{2}), (\ref{3}) in three dimensions always have a weak
solution $(p,u)$ with suitable properties. But the uniqueness of weak
solutions of the Navier--Stokes equation is not known in three space
dimensions case. Uniqueness of weak solution were proved in two space
dimensions case (\cite{LioPro}, \cite{Lio1}, see also \cite{Lad2}), and
under complementary conditions on smoothnes of the solution three dimensions
case was studied also (see, for example, \cite{Lio1}, \cite{Fur}, etc.).\
For the Euler equation, uniqueness of weak solutions is strikingly false
(see, \cite{Sch1}, \cite{Shn1}).

It is needed to note that the regularity of solutions in three dimensions
case were investigated and partial regularity of the suitable weak solutions
of the Navier--Stokes equations were obtained (see, \cite{Sch2}, \cite%
{CafKohNir}, \cite{Lin1}, \cite{Lio1}, \cite{Lad1}). There exist many works
which study different properties of solutions of the Navier--Stokes equation
(see, for example, \cite{Lio1}, \cite{Lad1}, \cite{Lin1}, \cite{Fef1}, \cite%
{FoiManRosTem}, \cite{FoiRosTem1}, \cite{FoiRosTem2}, \cite{FoiRosTem3}, 
\cite{GlaSveVic}, \cite{HuaWan}, \cite{PerZat}, \cite{Sol1}, \cite{Sol2}, 
\cite{Tem1}), etc.) and also different modifications of Navier--Stokes
equation (see, for example, \cite{Lio1}, \cite{Sol3}, etc.).

In this article an investigation of the question on uniqueness of the weak
solutions of the mixed problem with Dirichlet boundary condition for the
incompressible Navier-Stokes in the $3D$ case is given. Here for
investigation we use an approach that is different from usual methods which
are used for investigation of the questions of such type. Precisely this
approach allows us to solve the posed problem. So with the use of the this
approach we prove the uniqueness of the weak solutions of the incompressible
Navier--Stokes equations without complementary conditions on the velocity,
but under the complementary assumption on $h$ and $u_{0}$. And also we study
the auxiliary problems, more exactly we prove the existence and uniqueness
of the weak solutions of auxiliary problems. The main result of this article
is the following theorem:

\begin{theorem}
\label{Th_1}Let $\Omega \subset R^{3}$ be a bounded domain with sufficiently
smooth boundary $\partial \Omega $, $T>0$ be a number. Then for each given $%
\left( h,u_{0}\right) \in L^{2}\left( 0,T;\left( H^{1}\left( \Omega \right)
\right) ^{3}\right) \times $ $V\left( \Omega \right) $ and every fixed $p\in
L^{2}\left( 0,T;H^{1}\left( \Omega \right) \right) $ the incompressible $3D-$
Navier-Stokes Equations (i.e. problem (\ref{1}) - (\ref{3}) in $d=3$) has a
unique solution in $V\left( Q^{T}\right) $.
\end{theorem}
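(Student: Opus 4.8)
The plan is to exploit the decisive simplification that results from holding $p$ fixed. Once $p\in L^{2}(0,T;H^{1}(\Omega))$ is regarded as \emph{given} data, the gradient $\nabla p$ is a known element of $L^{2}(0,T;(L^{2}(\Omega))^{3})$, and together with $h\in L^{2}(0,T;(H^{1}(\Omega))^{3})$ it can be absorbed into a single forcing $f:=h-\nabla p\in L^{2}(0,T;(L^{2}(\Omega))^{3})$. In the usual treatment the pressure is not free but is slaved to $u$ through the Poisson relation $-\Delta p=\partial_{i}\partial_{j}(u_{i}u_{j})$, and it is precisely this nonlinear coupling that obstructs a direct analysis. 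Fixing $p$ severs that coupling, so system \eqref{1}--\eqref{3} collapses to the single semilinear parabolic problem
\[
\partial_{t}u-\nu\Delta u+(u\cdot\nabla)u=f,\qquad u|_{\partial\Omega}=0,\qquad u(0)=u_{0},
\]
posed for a divergence-free field $u\in V(Q^{T})$. The constraint \eqref{2} is encoded in the membership $u\in V(Q^{T})$, so that testing against $\varphi\in V(\Omega)$ annihilates the pressure contribution and no compatibility condition on $p$ is required. The whole question thus reduces to existence and uniqueness for this parabolic system, for which I would invoke the solvability of the auxiliary problems set up earlier in the paper as the building blocks.

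For existence I would run a Galerkin scheme on the eigenfunctions of the Stokes operator and pass to the limit by Aubin--Lions compactness. The substance lies in the a priori estimates, which must be pushed beyond the Leray--Hopf level $u\in L^{\infty}(0,T;(L^{2}(\Omega))^{3})\cap L^{2}(0,T;V(\Omega))$. Testing the equation against the Stokes operator $Au$ (morally against $-\Delta u$) and using $u_{0}\in V(\Omega)$ together with $f\in L^{2}(0,T;(L^{2}(\Omega))^{3})$, I would aim at the higher bound $u\in L^{\infty}(0,T;V(\Omega))\cap L^{2}(0,T;(H^{2}(\Omega))^{3})$. It is exactly this stronger regularity, made available by the complementary hypotheses on $h$ and $u_{0}$, that is intended to stand in for the missing Serrin-type condition on the velocity.

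The uniqueness step is then the classical energy comparison. Given two solutions $u_{1},u_{2}$ sharing the data $(h,u_{0},p)$, set $w:=u_{1}-u_{2}$; then $w(0)=0$, and writing $b(u,v,\phi):=\int_{\Omega}(u\cdot\nabla)v\cdot\phi\,dx$ one obtains
\[
\tfrac{1}{2}\tfrac{d}{dt}\|w\|_{L^{2}}^{2}+\nu\|\nabla w\|_{L^{2}}^{2}=-\,b(w,u_{2},w),
\]
since $b(u_{1},w,w)=0$ for the divergence-free $u_{1}$. Bounding the trilinear term via the three-dimensional Gagliardo--Nirenberg inequality $\|w\|_{L^{4}}\leq C\|w\|_{L^{2}}^{1/4}\|\nabla w\|_{L^{2}}^{3/4}$ and then Young's inequality gives
\[
\tfrac{d}{dt}\|w\|_{L^{2}}^{2}\leq C_{\nu}\,\|\nabla u_{2}\|_{L^{2}}^{4}\,\|w\|_{L^{2}}^{2},
\]
so that $w\equiv 0$ by Grönwall as soon as $u_{2}\in L^{4}(0,T;V(\Omega))$.

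The main obstacle is to secure this integrability, and more generally to close the higher-order a priori estimate globally on $[0,T]$. In three dimensions the convective contribution $\int_{\Omega}(u\cdot\nabla)u\cdot(-\Delta u)\,dx$ is supercritical, so the differential inequality governing $\|\nabla u\|_{L^{2}}^{2}$ is of Riccati type and, by the standard theory, controls the solution only on a time interval depending on $\|u_{0}\|_{V}$ and $\|f\|_{L^{2}(0,T;(L^{2})^{3})}$. The heart of the argument must therefore be to show that, in the fixed-pressure formulation and for the present data class, this estimate does not blow up before $T$---for instance by a bootstrap exploiting the extra derivative carried by $f=h-\nabla p$, or by transferring regularity through the auxiliary problems. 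Once $u\in L^{4}(0,T;V(\Omega))$ is established, both existence in $V(Q^{T})$ and the Grönwall uniqueness above follow routinely; everything hinges on this global control, which is the step I expect to be genuinely delicate.
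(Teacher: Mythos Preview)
Your approach diverges fundamentally from the paper's, and the gap you yourself flag is fatal rather than merely delicate. You attempt a direct three-dimensional energy comparison and correctly arrive at the requirement $u_{2}\in L^{4}(0,T;V(\Omega))$ (equivalently, global control of $\|\nabla u\|_{L^{2}}$ in $L^{4}$ in time). But obtaining this from the data $(h,u_{0})\in L^{2}(0,T;(H^{1})^{3})\times V(\Omega)$ and a fixed $p$ is precisely the global strong-solution estimate for the three-dimensional Navier--Stokes equations: the Riccati inequality you mention for $\|\nabla u\|_{L^{2}}^{2}$ does not close on an arbitrary interval $[0,T]$, and no amount of extra spatial regularity on $f=h-\nabla p$ changes its supercritical scaling. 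Your observation that fixing $p$ ``severs the coupling'' does not help here, since the pressure already vanishes when testing against divergence-free fields; the difficulty is entirely in the convective term, which you have not tamed.

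The paper avoids this obstruction by a completely different mechanism: a dimension-reduction argument. Lemmas~\ref{L_2.1} and~\ref{L_2.2} assert that if two solutions differ on a set of positive four-dimensional measure, then they must differ on a family of two-dimensional hyperplane slices $\Omega_{L}=\Omega\cap L$ of positive measure. The extra regularity hypotheses on $h$, $u_{0}$, and $p$ are used not to bootstrap three-dimensional estimates but to make the restriction to each slice meaningful, yielding an auxiliary two-dimensional problem (\ref{3.3})--(\ref{3.5}) on $Q_{L}^{T}$. On each slice the \emph{two-dimensional} Gagliardo--Nirenberg inequality $\|w\|_{4}^{2}\leq c\,\|w\|_{2}\|\nabla w\|_{2}$ applies, and the resulting Gr\"onwall inequality requires only $\|\nabla u\|_{2}\in L^{2}(0,T)$, which is available from the Leray--Hopf energy bound. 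Uniqueness on every slice (Theorem~\ref{Th_2.2}) then forces, via Lemma~\ref{L_2.2}, uniqueness on $Q^{T}$. Whatever one thinks of the legitimacy of this slicing step, it is the paper's device for circumventing exactly the three-dimensional estimate you were unable to close.
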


\section{\label{Sec_2}Preliminary results}

In the beginning we explore some properties that is connected with
uniqueness of solutions of the Navier--Stokes equations. As is well known
(see, for example, \cite{Lio1} and references therein) problem (\ref{1}) - (%
\ref{3}) possesses weak solution in the space $V\left( Q^{T}\right) $, that
will be defined later on, for any $u_{0i}\left( x\right) ,$ $h_{i}(x,t)$ ($i=%
\overline{1,3}$) which are contained in the suitable spaces (in the case $%
d=3 $, that we will investigate here, essentially).

\begin{definition}
\label{D_2.1}Let $V\left( Q^{T}\right) $ be the space determined as (see, 
\cite{Lio1}) 
\begin{equation*}
V\left( Q^{T}\right) \equiv L^{2}\left( 0,T;V\left( \Omega \right) \right) \cap W^{1,2}\left( 0,T;\left( H^{-1}\left( \Omega \right) \right) ^{3}\right) \cap L^{\infty }\left( 0,T;\left( H\left( \Omega \right) \right) ^{3}\right) ,%
\end{equation*}%
where 
\begin{equation*}
V\left( \Omega \right) =\left\{ v\left\vert \ v\in \right. \left( W_{0}^{1,2}\left( \Omega \right) \right) ^{3}\equiv \left( H_{0}^{1}\left( \Omega \right) \right) ^{3},\quad \func{div}v=0\right\} ,%
\end{equation*}%
and 
\begin{equation*}
u_{0}\in \left( H\left( \Omega \right) \right) ^{3},\quad h\in L^{2}\left(
0,T;\left( H^{-1}\left( \Omega \right) \right) ^{3}\right) ,
\end{equation*}%
here $\left( H\left( \Omega \right) \right) ^{3}$ is the closure in $\left(
L^{2}\left( \Omega \right) \right) ^{3}$ of 
\begin{equation*}
\left\{ \varphi \left\vert \ \varphi \in \left( C_{0}^{\infty }\left( \Omega
\right) \right) ^{3}\right. ,\func{div}\varphi =0\right\} .
\end{equation*}
\end{definition}

\begin{definition}
\label{D_2.2}A $u\in V\left( Q^{T}\right) $ is called the solution of
problem (\ref{1}) - (\ref{3}) if $u\left( t,x\right) $ satisfies the initial
condition $u\left( 0,x\right) =u_{0}\left( x\right) $ and the following
equation 
\begin{equation*}
\frac{d}{dt}\left\langle u,v\right\rangle -\left\langle \nu \Delta
u,v\right\rangle +\left\langle \underset{j=1}{\overset{d}{\sum }}u_{j}\frac{%
\partial u}{\partial x_{j}},v\right\rangle =\left\langle h-\nabla
p,v\right\rangle
\end{equation*}%
for any $v\in V\left( Q^{T}\right) $ and of every fixed $p\in L^{2}\left(
Q^{T}\right) $ on $\left( 0,T\right) $ in the sense of $L^{2}$.\footnote{%
For the widened explenation one can look \cite{Lio1}.}
\end{definition}

So, from now on we will use this definition together with the standard
notation that is widely used in the literature. Let the posed problem have
two different solutions $u(t,x),v(t,x)\in V\left( Q^{T}\right) $, then
within the known approach we get the function $w(t,x)=u(t,x)-v(t,x)$ of the
following problem (as is well known, if this method is used then the
pressure p "will disappear") 
\begin{equation}
\frac{1}{2}\frac{\partial }{\partial t}\left\Vert w\right\Vert _{2}^{2}+\nu \left\Vert \nabla w\right\Vert _{2}^{2}+\underset{j,k=1}{\overset{3}{\sum }}\left\langle \frac{\partial v_{k}}{\partial x_{j}}w_{k},w_{j}\right\rangle =0,
\label{2.1}
\end{equation}%
\begin{equation}
w\left( 0,x\right) \equiv w_{0}\left( x\right) =0,\quad x\in \Omega ;\quad w\left\vert \ _{\left( 0,T\right) \times \partial \Omega }\right. =0,
\label{2.2}
\end{equation}%
where $\left\langle f,g\right\rangle =\underset{i=1}{\overset{3}{\sum }}%
\underset{\Omega }{\int }f_{i}g_{i}dx$ for any $f,g\in \left( H\left( \Omega
\right) \right) ^{3}$, or $f\in \left( H^{-1}\left( \Omega \right) \right)
^{3}$ and $g\in \left( H^{-1}\left( \Omega \right) \right) ^{3}$,
respectively.

Now we have some remarks about properties of solutions of the problem (\ref%
{1}) - (\ref{3}). As is known (\cite{Ler1}, \cite{Lad1}, \cite{Lio1}),
problem (\ref{1}) - (\ref{3}) is solvable and possesses weak solution that
is contained in the space $V\left( Q^{T}\right) $, which is defined in
Definition \ref{D_2.1}. Therefore we will conduct our study under the
condition that problem (\ref{1}) - (\ref{3}) have weak solutions and they
are contained in $V\left( Q^{T}\right) $. For the study of the uniqueness of
the posed problem in the three dimensioned case we will use the ordinary
approach by assuming that problem (\ref{1}) - (\ref{3}) has, at least, two
different solutions $u(t,x),v(t,x)\in V\left( Q^{T}\right) $ but using a
different procedure we will demonstrate that this is not possible.

Consequently if we assume that problem (\ref{1}) - (\ref{3}) have two
different solutions then they need to be different at least on some
subdomain $Q_{1}^{T}$ of $Q^{T}$. In other words there exist a subdomain $%
\Omega _{1}$ of $\Omega $ and an interval $\left( t_{1},t_{2}\right)
\subseteq \left( 0,T\right] $ such that subdomain $Q_{1}^{T}$\ one can
define as $Q_{1}^{T}\subseteq \left( t_{1},t_{2}\right) \times \Omega
_{1}\subseteq Q^{T}$ with $meas_{4}\left( Q_{1}^{T}\right) >0$ for which the
following is true 
\begin{equation}
meas_{4}\left( \left\{ (t,x)\in Q^{T}\left\vert \ \left\vert u(t,x)-v(t,x)\right\vert \right. >0\right\} \right) =meas_{4}\left( Q_{1}^{T}\right) >0
\label{2.3}
\end{equation}%
here we denote the measure of $Q_{1}^{T}$ in $R^{4}$ as $meas_{4}\left(
Q_{1}^{T}\right) $ (Four dimensional Lebesgue measure.) Whence follows, that
for the subdomain $\Omega _{1}$ takes place the inequation: $meas_{3}(\Omega
_{1})>0$.

Even though we prove the following lemmas for $d>1$, we will use them mostly
for the case $d=4$.

In the beginning we prove the following lemmas that we will use later on.

\begin{lemma}
\label{L_2.1}Let $G\subset R^{d}$ be Lebesgue measurable subset then the
following statements are equivalent:

1) $\infty >meas_{d}\left( G\right) >0;$

2) there exist a subset $I\subset R^{1}$, $meas_{1}\left( I\right) >0$ and $%
G_{\beta }\subset L_{\beta ,d-1}$, $meas_{d-1}\left( G_{\beta }\right) >0$
such that $G=\underset{\beta \in I}{\cup }G_{\beta }\cup N$, where $N$ is a
set with $meas_{d-1}\left( N\right) =0$, and $L_{\beta ,d-1}$ is the
hyperplane of $R^{d}$, with $co\dim _{d}L_{\beta ,d-1}=1$, for any $\beta
\in I$, which is generated by single vector $y_{0}\in R^{d}$ and defined in
the following form 
\begin{equation*}
L_{\beta ,d-1}\equiv \left\{ y\in R^{d}\left\vert \ \left\langle
y_{0},y\right\rangle =\beta \right. \right\} ,\quad \forall \beta \in I.
\end{equation*}
\end{lemma}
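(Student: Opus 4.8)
The plan is to recognize the statement as a qualitative repackaging of Fubini's theorem for the slicing of $G$ by the parallel family $\{L_{\beta,d-1}\}_{\beta\in R}$. First I would normalize the geometry. Since each $L_{\beta,d-1}$ is a level set $\{y\in R^{d}:\langle y_{0},y\rangle=\beta\}$ of one fixed linear functional, I may assume $|y_{0}|=1$ and apply an orthonormal change of coordinates (a rotation, hence $meas_{d}$-preserving) carrying $y_{0}$ to the last basis vector $e_{d}$. In these coordinates $\langle y_{0},y\rangle=y_{d}$, the hyperplane $L_{\beta,d-1}$ becomes $\{y_{d}=\beta\}$, and the slice $G_{\beta}:=G\cap L_{\beta,d-1}$ is identified with the section $\{y'\in R^{d-1}:(y',\beta)\in G\}$, whose $(d-1)$-dimensional Lebesgue measure I write $meas_{d-1}(G_{\beta})$. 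Because an orthogonal map is an isometry on each hyperplane and preserves the $\beta$-parameter when $|y_{0}|=1$, it preserves all three Lebesgue measures $meas_{d}$, $meas_{d-1}$, $meas_{1}$ involved, so no generality is lost.

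For the implication $1)\Rightarrow 2)$ I would invoke Fubini directly. Since $G$ is measurable with $0<meas_{d}(G)<\infty$, Fubini's theorem guarantees that $G_{\beta}$ is $meas_{d-1}$-measurable for almost every $\beta$, that $\beta\mapsto meas_{d-1}(G_{\beta})$ is measurable, and that
\[
meas_{d}(G)=\int_{R}meas_{d-1}(G_{\beta})\,d\beta.
\]
As the left-hand side is strictly positive, the nonnegative integrand cannot vanish almost everywhere; hence $I:=\{\beta\in R:meas_{d-1}(G_{\beta})>0\}$ satisfies $meas_{1}(I)>0$. Setting $N:=\bigcup_{\beta\notin I}G_{\beta}$ (the union of the null slices), Fubini gives $meas_{d}(N)=\int_{R\setminus I}meas_{d-1}(G_{\beta})\,d\beta=0$, while by construction $G=\bigcup_{\beta\in I}G_{\beta}\cup N$ with $meas_{d-1}(G_{\beta})>0$ for every $\beta\in I$. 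This is precisely the decomposition claimed in $2)$.

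For the converse $2)\Rightarrow 1)$ I would run the same identity in the Tonelli direction. Given the decomposition, all integrands are nonnegative, so
\[
meas_{d}(G)=\int_{R}meas_{d-1}(G\cap L_{\beta,d-1})\,d\beta\geq\int_{I}meas_{d-1}(G_{\beta})\,d\beta>0,
\]
the final strict inequality holding because the integrand is strictly positive on the set $I$ of positive $meas_{1}$-measure. Finiteness of $meas_{d}(G)$ is not forced by the positivity conditions in $2)$ alone; it must be read as a side hypothesis (e.g.\ $G$ bounded, or the slice-measure function integrable), the genuine content of the equivalence being the dichotomy that positive $d$-measure corresponds exactly to a positive-$meas_{1}$ family of positive-$(d-1)$-measure slices.

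The only genuinely delicate point is the measure-theoretic bookkeeping rather than any deep idea: one must ensure the almost-everywhere-defined slice-measure function is measurable and that the exceptional set $N$ collecting the null slices is $meas_{d}$-null. I note that the set $N$ arising naturally here is $meas_{d}$-null (a union of $(d-1)$-null horizontal slices across a range of heights), which is the form in which the argument delivers it; both measurability and nullity are exactly the assertions of Fubini--Tonelli, so once the coordinates are aligned with $y_{0}$ the remainder is routine.
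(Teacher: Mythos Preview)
Your proposal is correct and follows essentially the same slicing-by-hyperplanes approach as the paper, which argues (implicitly via Fubini, by contradiction) that the set of parameters giving positive-measure slices must have positive one-dimensional measure, and conversely. Your version is more explicit in invoking Fubini--Tonelli and in flagging the two loose ends---that the finiteness $meas_{d}(G)<\infty$ is not recovered from 2), and that the remainder set $N$ naturally emerges as $meas_{d}$-null rather than $meas_{d-1}$-null---both of which the paper's proof passes over in silence.
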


\begin{proof}
Let $meas_{d}\left( G\right) >0$ and consider the class of hyperlanes $%
L_{\gamma ,d-1}$ for which $G\cap L_{\gamma ,d-1}\neq \varnothing $ and $%
\gamma \in I_{1}$, here $I_{1}\subset R^{1}$\ be some subset. It is clear
that 
\begin{equation*}
G\equiv \underset{\gamma \in I_{1}}{\bigcup }\left\{ x\in G\cap L_{\gamma
,d-1}\left\vert \ \gamma \in I_{1}\right. \right\} .
\end{equation*}%
Then there exists a subclass of hyperplanes $\left\{ L_{\gamma
,d-1}\left\vert \ \gamma \in I_{1}\right. \right\} $ for which the
inequality $meas_{d-1}\left( G\cap L_{\gamma ,d-1}\right) >0$ is satisfied.
The number of such type hyperplanes cannot be less than countable or equal
it because $meas_{d}\left( G\right) >0$, moreover this subclass of $I_{1}$\
must possess the $R^{1}$ measure greater than $0$ since $meas_{d}\left(
G\right) >0$. Indeed, let $I_{1,0}$ be this subclass and $meas_{1}\left(
I_{1,0}\right) =0$. If we consider the set 
\begin{equation*}
\left\{ \left( \gamma ,y\right) \in I_{1,0}\times G\cap L_{\gamma
,d-1}\left\vert \ \gamma \in I_{1,0},y\in G\cap L_{\gamma ,d-1}\right.
\right\} \subset R^{d}
\end{equation*}%
where $meas_{d-1}\left( G\cap L_{\gamma ,d-1}\right) >0$ for all $\gamma \in
I_{1,0}$, but $meas_{1}\left( I_{1,0}\right) =0$, then 
\begin{equation*}
meas_{d}\left( \left\{ \left( \gamma ,y\right) \in I_{1,0}\times G\cap
L_{\gamma ,d-1}\left\vert \ \gamma \in I_{1,0}\right. \right\} \right) =0.
\end{equation*}%
On the other hand we have 
\begin{equation*}
0=meas_{d}\left( \left\{ \left( \gamma ,y\right) \in I_{1}\times G\cap
L_{\gamma ,d-1}\left\vert \ \gamma \in I_{1}\right. \right\} \right)
=meas_{d}\left( G\right)
\end{equation*}%
as $meas_{d-1}\left( G\cap L_{\gamma ,d-1}\right) =0$ for all $\gamma \in
I_{1}-I_{1,0}$. But this contradicts the condition $meas_{d}\left( G\right)
>0$. Consequently, the statement 2 holds.

Let the statement 2 holds. It is clear that the class of hyperplanes $%
L_{\beta ,d-1}$ defined by such way are paralell and also we can define the
class of subsets of $G$ as its cross-section with hyperplanes, i.e. in the
form: $G_{\beta }\equiv G\cap L_{\beta ,d-1}$, $\beta \in I$. Then $G_{\beta
}\neq \varnothing $ and we can write $G_{\beta }\equiv G\cap L_{\beta ,d-1}$%
, $\beta \in I$, moreover $G\equiv \underset{\beta \in I}{\bigcup }\left\{
x\in G\cap L_{\beta ,d-1}\left\vert \ \beta \in I\right. \right\} \cup N$.
Whence we get 
\begin{equation*}
G\equiv \left\{ \left( \beta ,x\right) \in I\times G\cap L_{\beta
,d-1}\left\vert \ \beta \in I,x\in G\cap L_{\beta ,d-1}\right. \right\} \cup
N.
\end{equation*}

Consequently $meas_{d}\left( G\right) >0$ by virtue of conditions $%
meas_{1}\left( I\right) >0$ and $\ $

$meas_{d-1}\left( G_{\beta }\right) >0$ for any $\beta \in I$.
\end{proof}

From Lemma \ref{L_2.1} it follows that for the study of the measure of some
subset $%
\Omega
\subseteq R^{d}$ it is enough to study its foliations by a class of suitable
hyperplanes.

\begin{lemma}
\label{L_2.2}Let problem (\ref{1}) - (\ref{3}) has, at least, two different
solutions $u,v$ that are contained in $V\left( Q^{T}\right) $. Then there
exists, at least, one class of parallel and different hyperplanes $L_{\alpha
}$, $\alpha \in I\subseteq \left( \alpha _{1},\alpha _{2}\right) \subset
R^{1}$ ($\alpha _{2}>\alpha _{1}$)\ with $co\dim _{R^{3}}L_{\alpha }=1$
such, that $u\neq v$ on $Q_{L_{\alpha }}^{T}\equiv \left[ \left( 0,T\right)
\times \left( \Omega \cap L_{\alpha }\right) \right] \cap Q_{1}^{T}$, and
vice versa, here $meas_{1}\left( I\right) >0$ and $L_{\alpha }$ are
hyperlanes which are defined as follows there is vector $x_{0}\in
S_{1}^{R^{3}}\left( 0\right) $ such that 
\begin{equation*}
L_{\alpha }\equiv \left\{ x\in R^{3}\left\vert \ \left\langle
x_{0},x\right\rangle =\alpha ,\right. \ \forall \alpha \in I\right\} .
\end{equation*}
\end{lemma}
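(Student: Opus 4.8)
The plan is to transfer the assumption $u\neq v$ on the four‑dimensional set $Q_1^T$ into a statement about a one‑parameter family of spatial hyperplanes, using Lemma \ref{L_2.1} as the main engine. The key observation is that $Q_1^T$ lives in the time–space cylinder $(0,T)\times\Omega\subset R^4$, and what I want is a slicing of the \emph{spatial} variable $x\in\Omega\subset R^3$ alone; so I would first reduce the four‑dimensional measure condition to a three‑dimensional one for the spatial sections.

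First I would recall from (\ref{2.3}) that $meas_4(Q_1^T)>0$, and by Fubini project onto the time axis: there is a set of times $t$ of positive one‑dimensional measure for which the spatial slice $\Omega_t\equiv\{x\in\Omega\mid u(t,x)\neq v(t,x)\}$ satisfies $meas_3(\Omega_t)>0$ (this is essentially the remark already made in the excerpt that $meas_3(\Omega_1)>0$). Fix one such time slice, or more robustly work with the subdomain $\Omega_1\subseteq\Omega$ from (\ref{2.3}) satisfying $meas_3(\Omega_1)>0$. Now apply Lemma \ref{L_2.1} with $d=3$ and $G=\Omega_1$: since $\infty>meas_3(\Omega_1)>0$, statement (2) of the lemma provides a unit vector $x_0$, an index set $I\subset R^1$ with $meas_1(I)>0$, and hyperplanes $L_{\alpha}\equiv\{x\in R^3\mid\langle x_0,x\rangle=\alpha\}$ with $co\dim_{R^3}L_{\alpha}=1$ such that $\Omega_1=\bigcup_{\alpha\in I}(\Omega_1\cap L_{\alpha})\cup N$ with $meas_2(\Omega_1\cap L_{\alpha})>0$ for each $\alpha\in I$ and $meas_2(N)=0$. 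Rescaling $x_0$ to lie on the unit sphere $S_1^{R^3}(0)$ matches the normalization demanded in the statement.

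Next I would assemble the space–time cylinders $Q^T_{L_\alpha}\equiv[(0,T)\times(\Omega\cap L_\alpha)]\cap Q_1^T$. On each such slice the restriction of $u$ and $v$ differs on a set of positive measure, because $\Omega_1\cap L_\alpha$ has positive two‑dimensional measure and is contained in the support of $u-v$; combined with the positive‑measure set of admissible times this gives $u\neq v$ on $Q^T_{L_\alpha}$ for every $\alpha\in I$, and since $meas_1(I)>0$ the family is genuinely a one‑parameter (uncountable) family of parallel, distinct hyperplanes. The converse direction is immediate: if $u\neq v$ on $Q^T_{L_\alpha}$ for a family indexed by a set $I$ of positive measure, then reassembling the slices via the easy implication $(2)\Rightarrow(1)$ of Lemma \ref{L_2.1} yields $meas_4(Q_1^T)>0$, i.e. $u$ and $v$ are genuinely two different solutions.

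The step I expect to be the main obstacle is the clean passage from the four‑dimensional measure in time–space to the purely spatial three‑dimensional slicing that Lemma \ref{L_2.1} requires: one must be careful that the Fubini reduction picks a set of times of positive measure \emph{and simultaneously} that a single direction $x_0$ works for the spatial slices across those times. The safest route is not to fix a single time but to apply Lemma \ref{L_2.1} directly in $R^4$ to $Q_1^T$ to extract a foliation, and then argue that the foliating hyperplanes can be chosen to contain the time direction, so that their spatial traces are exactly the $L_\alpha$; verifying that this choice of direction is compatible with $meas_4(Q_1^T)>0$ is the delicate bookkeeping that the rest of the argument depends on.
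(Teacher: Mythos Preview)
Your proposal follows essentially the same route as the paper: first use Lemma \ref{L_2.1} (equivalently Fubini) on the time variable to obtain a set $J$ of positive one-dimensional measure on which the spatial slices of $Q_1^T$ have positive $R^3$-measure, then apply Lemma \ref{L_2.1} again with $d=3$ to produce the family $L_\alpha$, $\alpha\in I$, and reverse the construction for the converse. The paper is in fact less explicit than you are about the uniformity issue you flag (that a single direction $x_0$ should serve for all $t\in J$); it simply asserts that (\ref{2.5}) follows from (\ref{2.4}) without further comment, so your caution there is well placed but does not constitute a departure from the paper's argument.
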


\begin{proof}
Let problem (\ref{1}) - (\ref{3}) have two different solutions $u,v\in
V\left( Q^{T}\right) $ then there exist a subdomain of $Q^{T}$ on which
these solutions are different. Then there are $t_{1},t_{2}>0$ such, that for
any $t\in J\subseteq \left[ t_{1},t_{2}\right] \subseteq \left[ 0,T\right) $
the following holds 
\begin{equation}
meas_{R^{3}}\left( \left\{ x\in \Omega \left\vert \ \left\vert u\left( t,x\right) -v\left( t,x\right) \right\vert >0\right. \right\} \right) >0
\label{2.4}
\end{equation}%
where $meas_{1}\left( J\right) >0$ by the virtue of the codition 
\begin{equation*}
meas_{4}\left( \left\{ (t,x)\in Q^{T}\left\vert \ \left\vert
u(t,x)-v(t,x)\right\vert \right. >0\right\} \right) >0
\end{equation*}%
and of Lemma \ref{L_2.1}. Hence follows, that there exist, at least, one
class of parallel hyperplanes $L_{\alpha }$, $\alpha \in I\subseteq \left(
\alpha _{1},\alpha _{2}\right) \subset R^{1}$ with $co\dim _{R^{3}}L_{\alpha
}=1$ such that 
\begin{equation}
meas_{R^{2}}\left\{ x\in \Omega \cap L_{\alpha }\left\vert \ \left\vert u\left( t,x\right) -v\left( t,x\right) \right\vert >0\right. \right\} >0,\ \forall \alpha \in I
\label{2.5}
\end{equation}%
for $\forall t\in J$, where the subset $I$ is such that $I\subseteq \left(
\alpha _{1},\alpha _{2}\right) \subset R^{1}$ with $meas_{1}\left( I\right)
>0$, $meas_{1}\left( J\right) >0$ and (\ref{2.5}) holds, by virtue of (\ref%
{2.4}). This proves the "if" part of Lemma.

Now consider the converse assertion. Let there exist a class of hyperplanes $%
L_{\alpha }$, $\alpha \in I_{1}\subseteq \left( \alpha _{1},\alpha
_{2}\right) \subset R^{1}$ with $co\dim _{R^{3}}L_{\alpha }=1$ that fulfills
the condition of Lemma and $I_{1}$\ satisfies the same condition $I$. Then
there exist, at least, one subset $J_{1}$ of $\left[ 0,T\right) $ such, that 
$meas_{1}\left( J_{1}\right) >0$ and the inequality $u\left( t,x\right) \neq
v\left( t,x\right) $ on $Q_{2}^{T}$ with $meas_{4}\left( Q_{2}^{T}\right) >0$
defined as $Q_{2}^{T}\equiv J_{1}\times U_{L}$ takes place, where 
\begin{equation}
U_{L}\equiv \underset{\alpha \in I_{1}}{\bigcup }\left\{ x\in \Omega \cap L_{\alpha }\left\vert \ u\left( t,x\right) \neq v\left( t,x\right) \right. \right\} \subset \Omega ,\ t\in J_{1}
\label{2.6}
\end{equation}%
for which the inequality $mes_{R^{3}}\left( U_{L}\right) >0$ is satisfied by
the condition and of Lemma \ref{L_2.1}.

So we get 
\begin{equation*}
u\left( t,x\right) \neq v\left( t,x\right) \quad \text{on \ }Q_{2}^{T}\equiv
J_{1}\times U_{L},\quad meas_{4}\left( Q_{2}^{T}\right) >0.
\end{equation*}%
Thus the fact that $u\left( t,x\right) $ and $v\left( t,x\right) $ are
different functions in $V\left( Q^{T}\right) $ follows.
\end{proof}

May be one can prove more general lemmas of such type with the use of
regularity properties of weak solutions of this problem (see, \cite{Sch1}, 
\cite{CafKohNir}, \cite{Lin1}, etc.).

\section{\label{Sec_3}Uniqueness of Solutions of Navier-Stokes Equations in
Three Dimension case}

From Lemma \ref{L_2.2} it follows that for the investigation of the posed
question it is enough to investigate this problem on the suitable
cross-sections of the domain $Q^{T}\equiv \left( 0,T\right) \times \Omega $.

So, firstly we will define subdomains of $Q^{T}\equiv \left( 0,T\right)
\times \Omega $ as follows $Q_{L}^{T}\equiv \left( 0,T\right) \times \left(
\Omega \cap L\right) $, where $L$ is arbitrary fixed hyperplane of the
dimension two and $\Omega \cap L\neq \varnothing $. Therefore we will study
the problem on the subdomain defined by the use of the cross-section of $%
\Omega $ by arbitrary fixed hyperplane dimension two $L$, i.e. by the $%
co\dim _{R^{3}}L=1$ ($\Omega \cap L$, namely on $Q_{L}^{T}\equiv \left(
0,T\right) \times \left( \Omega \cap L\right) $).

Consequently, we will investigate uniqueness of the problem (\ref{1}) - (\ref%
{3}) on the "cross-section" $Q^{T}$ defined by the cross-section of $\Omega $%
, where $\Omega \subset R^{3}$. This cross-section we understand in the
following sense: Let $L$ be a hyperplane in $R^{3}$, i.e. with $co\dim
_{R^{3}}L=1$, that is equivalent to $R^{2}$. We denote by $\Omega _{L}$ the
cross-section of the form $\Omega _{L}\equiv \Omega \cap L\neq \varnothing $%
, $mes_{R^{2}}\left( \Omega _{L}\right) >0$, in the particular case $L\equiv
\left( x_{1},x_{2},0\right) $. In the other words, if $L$ is the hyperplane
in $R^{3}$ then we can determine it as $L\equiv \left\{ x\in R^{3}\left\vert
\ a_{1}x_{1}+a_{2}x_{2}+a_{3}x_{3}=b\right. \right\} $, where coefficients $%
a_{i},b\in R^{1}$ ($i=1,2,3)$ are the arbitrary fixed constants. Whence
follows, that $a_{3}x_{3}=b-a_{1}x_{1}-a_{2}x_{2}$ or $x_{3}=\frac{1}{a_{3}}%
\left( b-a_{1}x_{1}-a_{2}x_{2}\right) $ if one assume $a_{i}\neq 0$ ($%
i=1,2,3 $), or if one takes into account of substitutions: $\frac{b}{a_{3}}%
\Longrightarrow b,\frac{a_{1}}{a_{3}}\Longrightarrow a_{1}$ and $\frac{a_{2}%
}{a_{3}}\Longrightarrow a_{2}$ we derive $x_{3}=b-a_{1}x_{1}-a_{2}x_{2}$ in
the new coefficients.

Thus we have 
\begin{equation}
D_{3}\equiv \frac{\partial x_{1}}{\partial x_{3}}D_{1}+\frac{\partial x_{2}}{\partial x_{3}}D_{2}=-a_{1}^{-1}D_{1}-a_{2}^{-1}D_{2}\quad \&
\label{3.1}
\end{equation}%
\begin{equation}
D_{3}^{2}=a_{1}^{-2}D_{1}^{2}+a_{2}^{-2}D_{2}^{2}+2a_{1}^{-1}a_{2}^{-1}D_{1}D_{2},\quad D_{i}=\frac{\partial }{\partial x_{i}},i=1,2,3.
\label{3.2}
\end{equation}

For the application of our approach we need to assume that functions $u_{0}$
and $h$\ posseses some smoothness. Moreover as is known from the existence
result $p$ is arbitrary fixed elements of $L^{2}\left( Q^{T}\right) $, but
we will assume here and its smoothness.

So we assume the following conditions in order to apply the our approach to
the posed problem, i.e. now we need take account the following condition of
Theorem \ref{Th_1} holds. More exactly:

Let $p\in L^{2}\left( 0,T;H^{1}\left( \Omega \right) \right) $ and 
\begin{equation*}
u_{0}\in \left( H_{0}^{1}\left( \Omega \right) \right) ^{3},\quad h\in
L^{2}\left( 0,T;\left( H^{1}\left( \Omega \right) \right) ^{3}\right) .
\end{equation*}

Then we can transform of problem (\ref{1}) - (\ref{3}) to the following
problem, that is equivalent to the posed problem on $\left[ 0,T\right)
\times \Omega _{L}$ by virtue of the above condition of the main theorem,
here $T>0$ some number, 
\begin{equation*}
\frac{\partial u}{\partial t}-\nu \Delta u+\underset{j=1}{\overset{3}{\sum }}%
u_{j}D_{j}u+\nabla p=\frac{\partial u_{L}}{\partial t}-\nu \left(
D_{1}^{2}+D_{2}^{2}+D_{3}^{2}\right) u_{L}+
\end{equation*}%
\begin{equation*}
u_{L1}D_{1}u_{L}+u_{L2}D_{2}u_{L}+u_{L3}D_{3}u_{L}+\nabla p_{L}=\frac{%
\partial u_{L}}{\partial t}-\nu \left[
D_{1}^{2}+D_{2}^{2}+a_{1}^{-2}D_{1}^{2}\right. +
\end{equation*}%
\begin{equation*}
\left. a_{2}^{-2}D_{2}^{2}+2a_{1}^{-1}a_{2}^{-1}D_{1}D_{2}\right]
u_{L}+u_{L1}D_{1}u_{L}+u_{L2}D_{2}u_{L}-u_{L3}a_{1}^{-1}D_{1}u_{L}-
\end{equation*}%
\begin{equation*}
u_{L3}a_{2}^{-1}D_{2}u_{L}+\nabla p_{L}=\frac{\partial u_{L}}{\partial t}%
-\nu \left[ \left( 1+a_{1}^{-2}\right) D_{1}^{2}+\left( 1+a_{2}^{-2}\right)
D_{2}^{2}\right] u_{L}-
\end{equation*}%
\begin{equation}
2\nu a_{1}^{-1}a_{2}^{-1}D_{1}D_{2}u_{L}+\left( u_{L1}-a_{1u_{L}3}^{-1}\right) D_{1}u_{L}+\left( u_{L2}-a_{2}^{-1}u_{L3}\right) D_{2}u_{L}+\nabla p_{L}=h_{L}
\label{3.3}
\end{equation}%
on $\left( 0,T\right) \times \Omega _{L}$, by virtue of (\ref{3.1}) and (\ref%
{3.2}). We get 
\begin{equation}
\func{div}u_{L}=D_{1}\left( u_{L}-a_{1}^{-1}u_{L3}\right) +D_{2}\left( u_{L}-a_{2}^{-1}u_{L3}\right) =0,\quad x\in \Omega _{L},\ t>0
\label{3.4}
\end{equation}%
\begin{equation}
u_{L}\left( 0,x\right) =u_{L0}\left( x\right) ,\quad \left( t,x\right) \in \left( 0,T\right) \times \Omega _{L};\quad u_{L}\left\vert \ _{\left( 0,T\right) \times \partial \Omega _{L}}\right. =0.
\label{3.5}
\end{equation}%
using same way.

In the beginning it is necessary to investigate the existence of the
solution of problem (\ref{3.3}) - (\ref{3.5}) and determine the space where
the existing solutions are contained. Consequently for ending the proof of
the uniqueness theorem, it is enough to prove the existence theorem and the
uniqueness theorem for the derived problem (\ref{3.3}) - (\ref{3.5}), in
this case. So now we will investigate (\ref{3.3}) - (\ref{3.5}).

\subsection{\label{Subsec_3.1}Existence of Solution of Problem (\protect\ref%
{3.3}) - (\protect\ref{3.5}).}

To carry out the known argument started by Leray (\cite{Ler1}, see, also 
\cite{Lio1}) we can determine the following space 
\begin{equation*}
V\left( \Omega _{L}\right) =\left\{ v\left\vert \ v\in \right. \left(
W_{0}^{1,2}\left( \Omega _{L}\right) \right) ^{3}\equiv \left(
H_{0}^{1}\left( \Omega _{L}\right) \right) ^{3}\quad ,\func{div}v=0\right\} ,
\end{equation*}%
where $\func{div}$ is regarded in the sense (\ref{3.4}). Consequently, a
solution of this problem will be understood as follows: Let $h_{L}\in
L^{2}\left( 0,T;\left( H^{-1}\left( \Omega _{L}\right) \right) ^{3}\right) $
and $u_{0L}\in \left( H_{0}\left( \Omega _{L}\right) \right) ^{3}$, here $%
\left( H\left( \Omega _{L}\right) \right) ^{3}$ is the closure in $\left(
L^{2}\left( \Omega _{L}\right) \right) ^{3}$ of 
\begin{equation*}
\left\{ \varphi \left\vert \ \varphi \in \left( C_{0}^{\infty }\left( \Omega
_{L}\right) \right) ^{3}\right. ,\func{div}\varphi =0\right\} .
\end{equation*}

So, following the terminology used by J.-L. Lions \cite{Lio1} we call the
solutions of the problem (\ref{3.3}) - (\ref{3.5}) a pair of functions $%
\left( u_{L}(t,x),p_{L}(t,x)\right) $ if $\left(
u_{L}(t,x),p_{L}(t,x)\right) $ is a solution of the problem 
\begin{equation*}
\frac{d}{dt}\left\langle u_{L},v\right\rangle -\left\langle \nu \Delta
u_{L},v\right\rangle +\left\langle \underset{j=1}{\overset{3}{\sum }}%
u_{Lj}D_{j}u_{L},v\right\rangle =\left\langle h_{L}-\nabla
p_{L},v\right\rangle ,\quad \left\langle u_{L}\left( x\right)
,v\right\rangle =\left\langle u_{0L},v\right\rangle ,
\end{equation*}%
for any $v\in V\left( \Omega _{L}\right) $, then a function $p_{L}\left(
t,x\right) $ will be chosen as a fixed element of $L^{2}\left( \left(
0,T\right) \times \Omega _{L}\right) \equiv L^{2}\left( Q_{L}^{T}\right) $,
here $\left\langle \circ ,\circ \right\rangle $ is the dual form for the
pair of spaces $\left( V\left( \Omega _{L}\right) ,\left( H^{-1}\left(
\Omega _{L}\right) \right) ^{3}\right) $.

Hence we obtain 
\begin{equation*}
\frac{1}{2}\frac{d}{dt}\left\Vert u_{L}\right\Vert _{\left( H\left( \Omega
_{L}\right) \right) ^{3}}^{2}\left( t\right) +\nu \left( 1+a_{1}^{-2}\right)
\left\Vert D_{1}u_{L}\right\Vert _{\left( H\left( \Omega _{L}\right) \right)
^{3}}^{2}\left( t\right) +
\end{equation*}%
\begin{equation*}
\nu \left( 1+a_{2}^{-2}\right) \left\Vert D_{2}u_{L}\right\Vert _{\left(
H\left( \Omega _{L}\right) \right) ^{3}}^{2}\left( t\right) +2\nu
a_{1}^{-1}a_{2}^{-1}\left\langle D_{1}u_{L},D_{2}u_{L}\right\rangle \left(
t\right) =\left\langle h_{L},u_{L}\right\rangle ,
\end{equation*}%
with use of (\ref{3.1}) and next (\ref{3.4}), where $\left\langle
f,g\right\rangle =\underset{i=1}{\overset{3}{\sum }}\underset{\Omega _{L}}{%
\int }f_{i}g_{i}dx$ for any $f,g\in \left( H\left( \Omega _{L}\right)
\right) ^{3}$, or $f\in \left( H^{1}\left( \Omega _{L}\right) \right) ^{3}$
and $g\in \left( H^{-1}\left( \Omega _{L}\right) \right) ^{3}$, respectively.

From the above equality by usual calculations (as in \cite{Lio1}, \cite%
{SolAhm}, etc.) we get a priori estimates for the functions $u\left(
t,x\right) $ that shows the inclusion $u_{L}\in V\left( Q_{L}^{T}\right) $,
where

\begin{equation}
V\left( Q_{L}^{T}\right) \equiv L^{2}\left( 0,T;V\left( \Omega _{L}\right) \right) \cap W^{1,2}\left( 0,T;\left( H^{-1}\left( \Omega _{L}\right) \right) ^{3}\right) \cap L^{\infty }\left( 0,T;\left( H\left( \Omega _{L}\right) \right) ^{3}\right) .
\label{3.6}
\end{equation}

If one take into account the stationary part (or elliptic part) in the left
side of above equality then it is not difficult to see the coerciveness of
the operator induced by this part from $L^{2}\left( 0,T;V\left( \Omega
_{L}\right) \right) $ to $L^{2}\left( 0,T;\left( H^{-1}\left( \Omega
_{L}\right) \right) ^{3}\right) $. Moreover with the use of the embedding
theorems (see, \cite{Lio1}, \cite{Sol3}, \cite{Sol2}) we obtain the weak
compactness of the operator induced by the posed problem, also. The
calculations of such type were used in many works devoted to the problems of
such type(see, in particular, \cite{Lio1}, \cite{SolAhm}, \cite{Sol4} and
their references).

So, with use of methods employed for problems of such type (see, for
example, \cite{Lio1}, \cite{Sol1} etc.) we obtain solvability of this
problem in the space $V\left( Q_{L}^{T}\right) $.

Consequently the following result is proved.

\begin{theorem}
\label{Th_2.1}Under above conditions for any $\left( h_{L},u_{0L}\right) \in
L^{2}\left( 0,T;\left( H^{-1}\left( \Omega _{L}\right) \right) ^{3}\right)
\times $ $\left( H\left( \Omega _{L}\right) \right) ^{3}$ problem (\ref{3.3}%
) - (\ref{3.5}) has weak solutions $\left( u_{L}\left( t,x\right)
,p_{L}\left( t,x\right) \right) $ that is contained in $V_{L}\left(
Q_{L}^{T}\right) \times $ $L_{2}\left( Q_{L}^{T}\right) $, here $p_{L}\in
L_{2}\left( Q_{L}^{T}\right) $ is arbitrary fixed element.
\end{theorem}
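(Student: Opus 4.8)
The plan is to construct the solution by the Faedo--Galerkin method, following the classical Leray--Hopf argument adapted to the transformed operator on the two-dimensional cross-section $\Omega_L$. Since $p_L$ is a fixed given element of $L^2(Q_L^T)$, the term $-\nabla p_L$ is absorbed into the data: set $f := h_L - \nabla p_L$, which lies in $L^2(0,T;(H^{-1}(\Omega_L))^3)$ because $p_L \in L^2(Q_L^T)$ implies $\nabla p_L \in L^2(0,T;(H^{-1}(\Omega_L))^3)$. The problem then becomes that of finding $u_L$ with
\begin{equation*}
\frac{d}{dt}\langle u_L, v\rangle + a(u_L, v) + b(u_L; u_L, v) = \langle f, v\rangle, \qquad \langle u_L(0), v\rangle = \langle u_{0L}, v\rangle,
\end{equation*}
for every $v \in V(\Omega_L)$, where $a(\cdot,\cdot)$ is the bilinear form of the elliptic part of (\ref{3.3}) and $b(u; w, v) := \langle (u_1 - a_1^{-1}u_3)D_1 w + (u_2 - a_2^{-1}u_3)D_2 w, v\rangle$ is the transformed convective trilinear form. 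First I would fix a basis $\{w_k\}_{k \geq 1}$ of $V(\Omega_L)$ (for instance the eigenfunctions of the elliptic operator associated with $a$), seek the approximation $u_L^m(t) = \sum_{k=1}^m g_{mk}(t) w_k$ solving the projected system, and obtain local-in-time existence of the coefficients $g_{mk}$ from the Peano existence theorem for the resulting ODE system.

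The two structural facts that make the a priori estimates close, and that must be verified precisely because the operator has been transformed, are the coercivity of $a$ and the vanishing of $b(u_L; u_L, u_L)$. For the first, testing the elliptic part with $u_L$ produces the quadratic form with matrix $M = \left(\begin{smallmatrix} 1+a_1^{-2} & a_1^{-1}a_2^{-1} \\ a_1^{-1}a_2^{-1} & 1+a_2^{-2} \end{smallmatrix}\right)$, whose determinant equals $1 + a_1^{-2} + a_2^{-2} > 0$ and whose trace is positive, so that $M$ is positive definite; combined with the Poincar\'e inequality on $\Omega_L$ this yields $a(u,u) \geq c\|u\|_{(H_0^1(\Omega_L))^3}^2$ with $c > 0$. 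For the second, taking $v = u_L$ and integrating by parts gives
\begin{equation*}
b(u_L; u_L, u_L) = -\frac{1}{2}\int_{\Omega_L}\left[ D_1(u_{L1} - a_1^{-1}u_{L3}) + D_2(u_{L2} - a_2^{-1}u_{L3})\right]|u_L|^2\,dx,
\end{equation*}
which vanishes by the transformed incompressibility constraint (\ref{3.4}) together with the boundary condition $u_L|_{\partial\Omega_L} = 0$ (the boundary integral vanishes there as well). Testing the Galerkin system with $u_L^m$ and applying Gronwall's lemma then furnishes a uniform bound of $u_L^m$ in $L^\infty(0,T;(H(\Omega_L))^3) \cap L^2(0,T;V(\Omega_L))$; estimating $\partial_t u_L^m$ from the equation itself (here the two-dimensionality of $\Omega_L$ and the Ladyzhenskaya inequality control the nonlinear term) bounds it in $L^2(0,T;(H^{-1}(\Omega_L))^3)$, so that $\{u_L^m\}$ is bounded in $V(Q_L^T)$.

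Finally I would extract a subsequence converging weakly in $L^2(0,T;V(\Omega_L))$, weakly-$\ast$ in $L^\infty(0,T;(H(\Omega_L))^3)$, and, by the Aubin--Lions--Simon compactness lemma applied to the chain $V(\Omega_L) \hookrightarrow (H(\Omega_L))^3 \hookrightarrow (H^{-1}(\Omega_L))^3$, strongly in $L^2(0,T;(H(\Omega_L))^3)$. Passing to the limit in the linear terms is immediate; the main obstacle is the convective term, where the strong $L^2$ convergence of $u_L^m$ must be combined with the weak convergence of its first spatial derivatives in order to identify the limit of $b(u_L^m; u_L^m, w_j)$ with $b(u_L; u_L, w_j)$ for each fixed basis function $w_j$. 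Because $\Omega_L$ is two-dimensional this identification is standard, and a density argument extends the limiting identity to all $v \in V(\Omega_L)$; recovering the initial datum in $(H(\Omega_L))^3$ and pairing the limit $u_L$ with the prescribed $p_L$ yields the asserted weak solution in $V(Q_L^T) \times L^2(Q_L^T)$.
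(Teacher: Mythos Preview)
Your proposal is correct and follows essentially the same route as the paper: the paper derives the same energy identity (the nonlinear term vanishes via the modified divergence condition (\ref{3.4}), the elliptic part is coercive---your positive-definiteness check of $M$ makes explicit what the paper asserts in one line), obtains the a~priori bounds in $V(Q_L^T)$, and then defers to the standard Leray--Lions/Galerkin machinery in \cite{Lio1}, \cite{Sol3}, \cite{Tem1} for the compactness and passage to the limit. You have simply written out those ``known general solvability'' steps in detail; there is no substantive difference in strategy.
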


We need to note that for the proof of this theorem it is enough to apply the
known general solvability result from \cite{Sol3} (or \cite{Sol2}, \cite%
{Sol4}, \cite{Tem1} see, also their references).

\subsection{\label{Subsec_3.2}Uniqueness of Solution of Problem (\protect\ref%
{3.3}) - (\protect\ref{3.5}).}

For the study of the uniqueness of the solution as usually: we will assume
that the posed problem have two different solutions $u=\left(
u_{1},u_{2},u_{3}\right) $, $v=\left( v_{1},v_{2},v_{3}\right) $ and\ \ $%
p_{1},p_{2}$, and we will investigate its difference: $w=u-v$, $%
p=p_{1}-p_{2} $.\ (Here for brevity we won't specify indexes for functions
as we investigate problem (\ref{3.3}) - (\ref{3.5}) on $Q_{L}^{T}$.) Then
for $w,p$ we obtain the following problem 
\begin{equation*}
\frac{\partial w}{\partial t}-\nu \left[ \left( 1+a_{1}^{-2}\right)
D_{1}^{2}+\left( 1+a_{2}^{-2}\right) D_{2}^{2}\right] w-2\nu
a_{1}^{-1}a_{2}^{-1}D_{1}D_{2}w+
\end{equation*}%
\begin{equation*}
\left( u_{1}-a_{1}^{-1}u_{3}\right) D_{1}u-\left(
v_{1}-a_{1}^{-1}v_{3}\right) D_{1}v+\left( u_{2}-a_{2}^{-1}u_{3}\right)
D_{2}u-
\end{equation*}%
\begin{equation*}
\left( v_{2}-a_{2}^{-1}v_{3}\right) D_{2}v+\nabla p=0,
\end{equation*}%
\begin{equation*}
\func{div}w=D_{1}\left[ \left( u-a_{1}^{-1}u_{3}\right) -\left(
v-a_{1}^{-1}v_{3}\right) \right] +D_{2}\left[ \left(
u-a_{2}^{-1}u_{3}\right) \right. -
\end{equation*}%
\begin{equation}
\left. \left( v-a_{2}^{-1}v_{3}\right) \right] =D_{1}w+D_{2}w-\left( a_{1}^{-1}D_{1}+a_{2}^{-1}D_{2}\right) w_{3}=0,
\label{3.7}
\end{equation}%
\begin{equation}
w\left( 0,x\right) =0,\quad x\in \Omega \cap L;\quad w\left\vert \ _{\left( 0,T\right) \times \partial \Omega _{L}}\right. =0.
\label{3.8}
\end{equation}

Hence we derive 
\begin{equation*}
\frac{1}{2}\frac{d}{dt}\left\Vert w\right\Vert _{2}^{2}+\nu \left[ \left(
1+a_{1}^{-2}\right) \left\Vert D_{1}w\right\Vert _{2}^{2}+\left(
1+a_{2}^{-2}\right) \left\Vert D_{2}w\right\Vert _{2}^{2}\right] +
\end{equation*}%
\begin{equation*}
2\nu a_{1}^{-1}a_{2}^{-1}\left\langle D_{1}w,D_{2}w\right\rangle
+\left\langle \left( u_{1}-a_{1}^{-1}u_{3}\right) D_{1}u-\left(
v_{1}-a_{1}^{-1}v_{3}\right) D_{1}v,w\right\rangle +
\end{equation*}%
\begin{equation*}
\left\langle \left( u_{2}-a_{2}^{-1}u_{3}\right) D_{2}u-\left(
v_{2}-a_{2}^{-1}v_{3}\right) D_{2}v,w\right\rangle =0
\end{equation*}%
or 
\begin{equation*}
\frac{1}{2}\frac{d}{dt}\left\Vert w\right\Vert _{2}^{2}+\nu \left(
\left\Vert D_{1}w\right\Vert _{2}^{2}+\left\Vert D_{2}w\right\Vert
_{2}^{2}\right) +\nu \left[ a_{1}^{-2}\left\Vert D_{1}w\right\Vert
_{2}^{2}+\right.
\end{equation*}%
\begin{equation*}
\left. a_{2}^{-2}\left\Vert D_{2}w\right\Vert
_{2}^{2}+2a_{1}^{-1}a_{2}^{-1}\left\langle D_{1}w,D_{2}w\right\rangle \right]
+\left\langle u_{1}D_{1}u-v_{1}D_{1}v,w\right\rangle +
\end{equation*}%
\begin{equation}
\left\langle u_{2}D_{2}u-v_{2}D_{2}v,w\right\rangle -a_{1}^{-1}\left\langle u_{3}D_{1}u-v_{3}D_{1}v,w\right\rangle -a_{2}^{-1}\left\langle u_{3}D_{2}u-v_{3}D_{2}v,w\right\rangle =0.
\label{3.9}
\end{equation}

If we consider the last 4 added elements of left part (\ref{3.9}),
separately, and if we simplify the calculations then we get 
\begin{equation*}
\left\langle w_{1}D_{1}u,w\right\rangle +\left\langle
v_{1}D_{1}w,w\right\rangle +\left\langle w_{2}D_{2}u,w\right\rangle
+\left\langle v_{2}D_{2}w,w\right\rangle -
\end{equation*}%
\begin{equation*}
a_{1}^{-1}\left\langle w_{3}D_{1}u,w\right\rangle -a_{1}^{-1}\left\langle
v_{3}D_{1}w,w\right\rangle -a_{2}^{-1}\left\langle
w_{3}D_{2}u,w\right\rangle -a_{2}^{-1}\left\langle
v_{3}D_{2}w,w\right\rangle =
\end{equation*}%
\begin{equation*}
\left\langle w_{1}D_{1}u,w\right\rangle +\frac{1}{2}\left\langle
v_{1},D_{1}w^{2}\right\rangle +\left\langle w_{2}D_{2}u,w\right\rangle +%
\frac{1}{2}\left\langle v_{2},D_{2}w^{2}\right\rangle
-a_{1}^{-1}\left\langle w_{3}D_{1}u,w\right\rangle -
\end{equation*}%
\begin{equation*}
\frac{1}{2}a_{1}^{-1}\left\langle v_{3},D_{1}w^{2}\right\rangle
-a_{2}^{-1}\left\langle w_{3}D_{2}u,w\right\rangle -\frac{1}{2}%
a_{2}^{-1}\left\langle v_{3},D_{2}w^{2}\right\rangle =
\end{equation*}%
\begin{equation*}
\frac{1}{2}\left\langle v_{1}-a_{1}^{-1}v_{3},D_{1}w^{2}\right\rangle +\frac{%
1}{2}\left\langle v_{2}-a_{2}^{-1}v_{3},D_{2}w^{2}\right\rangle +
\end{equation*}%
\begin{equation*}
\left\langle \left( w_{1}-a_{1}^{-1}w_{3}\right) w,D_{1}u\right\rangle
+\left\langle \left( w_{2}-a_{2}^{-1}w_{3}\right) w,D_{2}u\right\rangle =
\end{equation*}%
\begin{equation*}
\left\langle \left( w_{1}-a_{1}^{-1}w_{3}\right) w,D_{1}u\right\rangle
+\left\langle \left( w_{2}-a_{2}^{-1}w_{3}\right) w,D_{2}u\right\rangle .
\end{equation*}%
In the last equality we use the condition $\func{div}v=0$ (see, (\ref{3.4}))
and the condition (\ref{3.8}).

If we take into account this equality in equation (\ref{3.9}) then we get
the equation 
\begin{equation*}
\frac{1}{2}\frac{d}{dt}\left\Vert w\right\Vert _{2}^{2}+\nu \left(
\left\Vert D_{1}w\right\Vert _{2}^{2}+\left\Vert D_{2}w\right\Vert
_{2}^{2}\right) +\nu \left[ a_{1}^{-2}\left\Vert D_{1}w\right\Vert
_{2}^{2}+\right.
\end{equation*}%
\begin{equation*}
\left. a_{2}^{-2}\left\Vert D_{2}w\right\Vert
_{2}^{2}+2a_{1}^{-1}a_{2}^{-1}\left\langle D_{1}w,D_{2}w\right\rangle \right]
+\left\langle \left( w_{1}-a_{1}^{-1}w_{3}\right) w,D_{1}u\right\rangle +
\end{equation*}%
\begin{equation}
\left\langle \left( w_{2}-a_{2}^{-1}w_{3}\right) w,D_{2}u\right\rangle =0,\quad \left( t,x\right) \in \left( 0,T\right) \times \Omega _{L}
\label{3.10}
\end{equation}

Consequently, we derive the Cauchy problem for the equation (\ref{3.10})
with the initial condition 
\begin{equation}
\left\Vert w\right\Vert _{2}\left( 0\right) =0.  \label{3.11}
\end{equation}

Hence giving rise to the differential inequality we get the following Cauchy
problem for the differential inequality 
\begin{equation*}
\frac{1}{2}\frac{d}{dt}\left\Vert w\right\Vert _{2}^{2}+\nu \left(
\left\Vert D_{1}w\right\Vert _{2}^{2}+\left\Vert D_{2}w\right\Vert
_{2}^{2}\right) \leq \left\vert \left\langle \left(
w_{1}-a_{1}^{-1}w_{3}\right) w,D_{1}u\right\rangle \right\vert +
\end{equation*}%
\begin{equation}
\left\vert \left\langle \left( w_{2}-a_{2}^{-1}w_{3}\right) w,D_{2}u\right\rangle \right\vert ,
\label{3.10'}
\end{equation}%
with the initial condition (\ref{3.11}).

We have the following estimate for the right side of (3.10')%
\begin{equation*}
\left\vert \left\langle \left( w_{1}-a_{1}^{-1}w_{3}\right)
w,D_{1}u\right\rangle \right\vert +\left\vert \left\langle \left(
w_{2}-a_{2}^{-1}w_{3}\right) w,D_{2}u\right\rangle \right\vert \leq
\end{equation*}%
\begin{equation*}
\left( \left\Vert w_{1}-a_{1}^{-1}w_{3}\right\Vert _{4}+\left\Vert
w_{2}-a_{2}^{-1}w_{3}\right\Vert _{4}\right) \left\Vert w\right\Vert
_{4}\left\Vert \nabla u\right\Vert _{2}\leq
\end{equation*}%
whence with the use of Gagliardo-Nirenberg inequality (\cite{BesIlNik}) we
have 
\begin{equation*}
\left( 1+\max \left\{ \left\vert a_{1}^{-1}\right\vert ,\left\vert
a_{2}^{-1}\right\vert \right\} \right) \left\Vert w\right\Vert
_{4}^{2}\left\Vert \nabla u\right\Vert _{2}\leq c\left\Vert w\right\Vert
_{2}\left\Vert \nabla w\right\Vert _{2}\left\Vert \nabla u\right\Vert _{2}.
\end{equation*}%
It follows to note that 
\begin{equation*}
\left( w_{1}-a_{1}^{-1}w_{3}\right) w,\ \left( w_{2}-a_{2}^{-1}w_{3}\right)
w\in L^{2}\left( 0,T;V^{\ast }\left( \Omega _{L}\right) \right)
\end{equation*}%
by virtue of (\ref{3.6}).

Now taking this into account in (\ref{3.10'}) one can arrive the following
Cauchy problem for inequality 
\begin{equation*}
\frac{1}{2}\frac{d}{dt}\left\Vert w\right\Vert _{2}^{2}\left( t\right) +\nu
\left\Vert \nabla w\right\Vert _{2}^{2}\left( t\right) \leq c\left\Vert
w\right\Vert _{2}\left( t\right) \left\Vert \nabla w\right\Vert _{2}\left(
t\right) \left\Vert \nabla u\right\Vert _{2}\left( t\right) \leq
\end{equation*}%
\begin{equation*}
C\left( c,\nu \right) \left\Vert \nabla u\right\Vert _{2}^{2}\left( t\right)
\left\Vert w\right\Vert _{2}^{2}\left( t\right) +\nu \left\Vert \nabla
w\right\Vert _{2}^{2}\left( t\right) ,\quad \left\Vert w\right\Vert
_{2}\left( 0\right) =0
\end{equation*}%
since $w\in L^{\infty }\left( 0,T;\left( H\left( \Omega _{L}\right) \right)
^{3}\right) $, and consequently $\left\Vert w\right\Vert _{2}\left\Vert
\nabla w\right\Vert _{2}\in L^{2}\left( 0,T\right) $ by the virtue of the
above existence theorem $w\in V\left( Q_{L}^{T}\right) $, here $C\left(
c,\nu \right) >0$ is constant.

Thus we obtain the problem 
\begin{equation*}
\frac{d}{dt}\left\Vert w\right\Vert _{2}^{2}\left( t\right) \leq 2C\left(
c,\nu \right) \left\Vert \nabla u\right\Vert _{2}^{2}\left( t\right)
\left\Vert w\right\Vert _{2}^{2}\left( t\right) ,\quad \left\Vert
w\right\Vert _{2}\left( 0\right) =0,
\end{equation*}%
if we denote $\left\Vert w\right\Vert _{2}^{2}\left( t\right) \equiv y\left(
t\right) $ then 
\begin{equation*}
\frac{d}{dt}y\left( t\right) \leq 2C\left( c,\nu \right) \left\Vert \nabla
u\right\Vert _{2}^{2}\left( t\right) y\left( t\right) ,\quad y\left(
0\right) =0.
\end{equation*}

Consequently we obtain $\left\Vert w\right\Vert _{2}^{2}\left( t\right)
\equiv y\left( t\right) =0$, i.e. the following result is proved: \ 

\begin{theorem}
\label{Th_2.2}Under conditions of Lemma 2.1 for each given 
\begin{equation*}
\left( h,u_{0}\right) \in L^{2}\left( 0,T;\left( H^{-1}\left( \Omega
_{L}\right) \right) ^{3}\right) \times \left( H\left( \Omega _{L}\right)
\right) ^{3}
\end{equation*}
problem (\ref{3.3}) - (\ref{3.5}) has a unique weak solution $\left( u\left(
t,x\right) ,p\left( t,x\right) \right) $ that is contained in $V\left(
Q_{L}^{T}\right) \times $ $L_{2}\left( Q_{L}^{T}\right) $ (here as above $%
p\in L_{2}\left( Q_{L}^{T}\right) $ is arbitrary fixed element).
\end{theorem}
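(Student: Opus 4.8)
The plan is to combine the existence assertion of Theorem \ref{Th_2.1} with a Gronwall-type energy argument supplying uniqueness; the decisive feature is that the reduced problem (\ref{3.3})--(\ref{3.5}) lives on the two-dimensional cross-section $\Omega_L$, so the critical Sobolev exponent behaves as in the planar theory. First I would invoke Theorem \ref{Th_2.1} to guarantee that, for the given data $(h,u_0)$ and every fixed $p\in L_2(Q_L^T)$, there is at least one weak solution $u\in V(Q_L^T)$; since the pressure enters as a prescribed element, uniqueness concerns only the velocity field. It then remains to show that two solutions with the same data coincide. For this I would take solutions $u=(u_1,u_2,u_3)$ and $v=(v_1,v_2,v_3)$, with fixed pressures $p_1,p_2$, and study the difference $w=u-v$, $p=p_1-p_2$, which by construction solves problem (\ref{3.7})--(\ref{3.8}) with vanishing initial datum.

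Next I would test the equation for $w$ against $w$ in the duality of $(V(\Omega_L),(H^{-1}(\Omega_L))^3)$, legitimate because $w\in L^2(0,T;V(\Omega_L))\cap W^{1,2}(0,T;(H^{-1}(\Omega_L))^3)$ by (\ref{3.6}). This produces the energy identity (\ref{3.9}), in which the modified elliptic part contributes the form $\nu[(1+a_1^{-2})\|D_1w\|_2^2+(1+a_2^{-2})\|D_2w\|_2^2+2a_1^{-1}a_2^{-1}\langle D_1w,D_2w\rangle]$, which dominates $\nu(\|D_1w\|_2^2+\|D_2w\|_2^2)=\nu\|\nabla w\|_2^2$. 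The key algebraic step is the treatment of the nonlinear pairings: writing $u_iD_iu-v_iD_iv=w_iD_iu+v_iD_iw$ and integrating the $v_iD_iw$ terms by parts, the Dirichlet condition (\ref{3.8}) removes the boundary contributions while the divergence-free condition $\func{div}v=0$ in the sense (\ref{3.4}) annihilates the remaining half, leaving only $\langle(w_1-a_1^{-1}w_3)w,D_1u\rangle+\langle(w_2-a_2^{-1}w_3)w,D_2u\rangle$, whence (\ref{3.10}).

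Having reduced (\ref{3.9}) to (\ref{3.10}), I would pass to the differential inequality (\ref{3.10'}) and estimate its right-hand side. By H\"older's inequality the two pairings are bounded by $(\|w_1-a_1^{-1}w_3\|_4+\|w_2-a_2^{-1}w_3\|_4)\|w\|_4\|\nabla u\|_2$, and because $\Omega_L$ is two-dimensional the Gagliardo--Nirenberg (Ladyzhenskaya) inequality $\|w\|_4^2\le c\|w\|_2\|\nabla w\|_2$ applies, giving the bound $c\|w\|_2\|\nabla w\|_2\|\nabla u\|_2$. A Young inequality then splits this as $C(c,\nu)\|\nabla u\|_2^2\|w\|_2^2+\nu\|\nabla w\|_2^2$, and the second summand is absorbed into the dissipative term on the left. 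Setting $y(t)=\|w\|_2^2(t)$ one arrives at $y'(t)\le 2C(c,\nu)\|\nabla u\|_2^2(t)\,y(t)$ with $y(0)=0$; since $u\in L^2(0,T;V(\Omega_L))$ forces $\|\nabla u\|_2^2\in L^1(0,T)$, Gronwall's lemma yields $y\equiv 0$, i.e. $w\equiv 0$ and $u=v$.

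I expect the principal technical obstacle to be the justification of the nonlinear estimate, namely verifying that $(w_1-a_1^{-1}w_3)w$ and $(w_2-a_2^{-1}w_3)w$ indeed lie in $L^2(0,T;V^{\ast}(\Omega_L))$ so that their pairings with $D_1u,D_2u\in L^2(0,T;(L^2(\Omega_L))^3)$ are well defined; this is precisely where the membership $w\in V(Q_L^T)$ and the $L^\infty(0,T;(H(\Omega_L))^3)$ regularity enter, ensuring $\|w\|_2\|\nabla w\|_2\in L^2(0,T)$. The conceptual heart, and the reason the argument closes here but not for the genuine three-dimensional problem, is the exponent in the Ladyzhenskaya inequality: on the two-dimensional $\Omega_L$ one has $\|w\|_4^2\le c\|w\|_2\|\nabla w\|_2$, whose linear dependence on $\|\nabla w\|_2$ permits absorption into $\nu\|\nabla w\|_2^2$, whereas the genuine $3D$ bound $\|w\|_4^2\le c\|w\|_2^{1/2}\|\nabla w\|_2^{3/2}$ produces a super-linear power that resists absorption. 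Consequently the most delicate point to scrutinize is whether the cross-sectional reduction of Lemma \ref{L_2.2} legitimately transfers this planar uniqueness conclusion back to the full domain $Q^T$.
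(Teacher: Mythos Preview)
Your proposal is correct and follows essentially the same route as the paper: existence from Theorem~\ref{Th_2.1}, the energy identity (\ref{3.9}) obtained by testing $w=u-v$ against itself, the cancellation of the $v$-transport terms via $\func{div}v=0$ and (\ref{3.8}) leading to (\ref{3.10}), then H\"older, the two-dimensional Gagliardo--Nirenberg estimate $\|w\|_4^2\le c\|w\|_2\|\nabla w\|_2$, Young's inequality, and Gronwall. Your closing remark that the genuinely delicate step is not the proof of Theorem~\ref{Th_2.2} itself but the passage back from the cross-sections $\Omega_L$ to the full domain is well placed, though it pertains to Theorem~\ref{Th_1} rather than to the statement at hand.
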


\subsection{\label{Subsec_3.3}Proof of Theorem \protect\ref{Th_1}}

\begin{proof}
(of Theorem \ref{Th_1}). As is known (\cite{Ler1}, \cite{Lad2}, \cite{Lio1}%
), problem (\ref{1}) - (\ref{3}) is solvable and possesses weak solution
that is contained in the space $V\left( Q^{T}\right) $. So, assume problem (%
\ref{1}) - (\ref{3}) has, at least, two different solutions under conditions
of Theorem \ref{Th_1}.

It is clear that if the problem have more than one solution then there is,
at least, some subdomain of $Q^{T}\equiv \left( 0,T\right) \times \Omega $,
on which this problem has, at least, two solutions such, that each from the
other are different. Consequently, starting from the above Lemma \ref{L_2.2}
we need to investigate the existence and uniqueness of the posed problem on
arbitrary fixed subdomain on which it is possibl that our problem can
possess more than one solution, more exactly in the case when the subdomain
is generated by an arbitrary fixed hyperplane by the virtue of Lemma \ref%
{L_2.2}. It is clear that, for us it is enough to prove that no such
subdomain generated by a hyperplane on which more than single solutions of
problem (\ref{1}) - (\ref{3}) exists, again by virtue of Lemma \ref{L_2.2}.
In other words, for us it remains to use the above results (i.e. Theorems %
\ref{Th_2.1} and \ref{Th_2.2}) in order to end the proof.

From the proved theorems above we obtain that there does not exist a
subdomain, defined in the previous section, on which problem (\ref{1}) - (%
\ref{3}) reduced on this subdomain might possesses more than one weak
solution. Consequently, taking Lemma \ref{L_2.2} into account we obtain that
the incompressible Navier-Stokes equation (i.e. problem (\ref{1}) - (\ref{3}%
)) under conditions of Theorem \ref{Th_1} possesses only one weak solution.
\end{proof}

Hence one can make the following conclusion

\section{Conclusion}

Let $p\in L^{2}\left( Q^{T}\right) $ is arbitrary fixed element and 
\begin{equation*}
\left( h,u_{0}\right) \in L^{2}\left( 0,T;\left( H^{-1}\left( \Omega \right)
\right) ^{3}\right) \times \left( H\left( \Omega \right) \right) ^{3}.
\end{equation*}%
Well known that the following inclusions are dense 
\begin{equation*}
L^{2}\left( 0,T;H^{1}\left( \Omega \right) \right) \subset L^{2}\left(
Q^{T}\right) ;\ V\left( \Omega \right) \subset \left( H\left( \Omega \right)
\right) ^{3}\ \ \&
\end{equation*}%
\begin{equation*}
L^{2}\left( 0,T;\left( H^{1}\left( \Omega \right) \right) ^{3}\right)
\subset L^{2}\left( 0,T;\left( H^{-1}\left( \Omega \right) \right)
^{3}\right)
\end{equation*}%
consequently there exist sequences 
\begin{equation*}
\left\{ p_{m}\right\} _{m=1}^{\infty }\subset L^{2}\left( 0,T;H^{1}\left(
\Omega \right) \right) ;\ \left\{ u_{0m}\right\} _{m=1}^{\infty }\subset
V\left( \Omega \right) ;
\end{equation*}%
\begin{equation*}
\ \left\{ h_{m}\right\} _{m=1}^{\infty }\subset L^{2}\left( 0,T;\left(
H^{1}\left( \Omega \right) \right) ^{3}\right)
\end{equation*}%
such that $p_{m}\longrightarrow p$ in $L^{2}\left( Q^{T}\right) $ and $%
\left\Vert p_{m}\right\Vert _{L^{2}\left( Q^{T}\right) }\leq \left\Vert
p\right\Vert _{L^{2}\left( Q^{T}\right) }$, $u_{0m}\longrightarrow u_{0}$ in 
$\left( H\left( \Omega \right) \right) ^{3}$ and $\left\Vert
u_{0m}\right\Vert _{\left( H\left( \Omega \right) \right) ^{3}}\leq
\left\Vert u_{0}\right\Vert _{\left( H\left( \Omega \right) \right) ^{3}}$, $%
h_{m}\longrightarrow h$ in $L^{2}\left( 0,T;\left( H^{-1}\left( \Omega
\right) \right) ^{3}\right) $ and $\left\Vert h_{m}\right\Vert _{L^{2}\left(
0,T;\left( H^{-1}\left( \Omega \right) \right) ^{3}\right) }\leq \left\Vert
h\right\Vert _{L^{2}\left( 0,T;\left( H^{-1}\left( \Omega \right) \right)
^{3}\right) }$.

Consequently for any $\varepsilon >0$ there exist $m\left( \varepsilon
\right) \geq 1$ such that under $m\geq m\left( \varepsilon \right) $ for the
corresponding elements $p_{m}$, $u_{0m}$, $h_{m}$ of the above sequences 
\begin{equation*}
\left\Vert p-p_{m}\right\Vert _{L^{2}\left( Q^{T}\right) }<\varepsilon ;\
\left\Vert u_{0}-u_{0m}\right\Vert _{\left( H\left( \Omega \right) \right)
^{3}}<\varepsilon ;\ \left\Vert h-h_{m}\right\Vert _{L^{2}\left( 0,T;\left(
H^{-1}\left( \Omega \right) \right) ^{3}\right) }<\varepsilon 
\end{equation*}%
hold, and also the claim of Theorem \ref{Th_1} is valid for problem (\ref{1}%
) - (\ref{3}) with these elements. 

One can note that the space that is everywhere dense subset of the necessary
space possess the minimal smoothnes in the relation with this space and also
is sufficient for the application of our approach. So we establish:

\begin{proposition}
For any $\left( h,u_{0},p\right) $ from the space $L^{2}\left(
0,T;H^{1}\left( \Omega \right) \right) \times V\left( \Omega \right) \times
L^{2}\left( 0,T;H^{1}\left( \Omega \right) \right) $ that is everywhere
dense in the space $L^{2}\left( 0,T;\left( H^{-1}\left( \Omega \right)
\right) ^{3}\right) \times \left( H\left( \Omega \right) \right) ^{3}\times
L^{2}\left( Q^{T}\right) $ the incompressible Navier-Stokes equation has
unique solution in $V\left( Q^{T}\right) $ in the sense of Definition \ref%
{D_2.2}.
\end{proposition}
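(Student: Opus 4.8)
The plan is to read the Proposition as the conjunction of two separate assertions and to dispatch them independently. The first assertion is purely topological: that the data space $L^{2}\left( 0,T;H^{1}\left( \Omega \right) \right) \times V\left( \Omega \right) \times L^{2}\left( 0,T;H^{1}\left( \Omega \right) \right) $ sits densely inside $L^{2}\left( 0,T;\left( H^{-1}\left( \Omega \right) \right) ^{3}\right) \times \left( H\left( \Omega \right) \right) ^{3}\times L^{2}\left( Q^{T}\right) $. The second is the actual solvability statement: that for data drawn from the smaller space the incompressible system (\ref{1})--(\ref{3}) has exactly one weak solution in $V\left( Q^{T}\right) $ in the sense of Definition \ref{D_2.2}. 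I would handle the solvability part first, since it requires no new work: the triple $\left( h,u_{0},p\right) $ ranging over $L^{2}\left( 0,T;\left( H^{1}\right) ^{3}\right) \times V\left( \Omega \right) \times L^{2}\left( 0,T;H^{1}\right) $ is exactly the admissible class in the hypotheses of Theorem \ref{Th_1}, so I would simply invoke Theorem \ref{Th_1} to obtain both existence and uniqueness in $V\left( Q^{T}\right) $.

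For the density, I would use that density in a finite product of normed spaces is equivalent to density in each coordinate, and then treat the three factors one at a time. For the two pressure/forcing-type factors I would first record the underlying spatial densities $H^{1}\left( \Omega \right) \subset L^{2}\left( \Omega \right) $ and $\left( H^{1}\left( \Omega \right) \right) ^{3}\subset \left( H^{-1}\left( \Omega \right) \right) ^{3}$, each of which follows from the density of smooth functions together with the standard dense embeddings $H^{1}\hookrightarrow L^{2}\hookrightarrow H^{-1}$; these then lift to the time-dependent spaces $L^{2}\left( 0,T;\cdot \right) $ by a routine simple-function approximation in the Bochner sense. For the initial-data factor I would argue that $V\left( \Omega \right) $ is dense in $\left( H\left( \Omega \right) \right) ^{3}$, using that both spaces are generated by the same family of divergence-free fields in $\left( C_{0}^{\infty }\left( \Omega \right) \right) ^{3}$, the first as their closure in the $\left( H_{0}^{1}\right) ^{3}$ norm and the second as their closure in $\left( L^{2}\right) ^{3}$. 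Combining the three factor densities yields the claimed density of the product.

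I expect the only genuinely non-automatic point to be this last density, $V\left( \Omega \right) \subset \left( H\left( \Omega \right) \right) ^{3}$: it is a standard but not entirely trivial fact of the Leray--Helmholtz theory that the divergence-free $H_{0}^{1}$ fields are dense in the $L^{2}$-closure $H\left( \Omega \right) $ of divergence-free test fields, and the cleanest justification rests on the orthogonal decomposition of $\left( L^{2}\left( \Omega \right) \right) ^{3}$ into its solenoidal and gradient parts. Once that is in hand the remainder is bookkeeping, and if a quantitative form of the conclusion is desired it is already available: the norm-controlled approximating sequences $\left\{ p_{m}\right\} ,\left\{ u_{0m}\right\} ,\left\{ h_{m}\right\} $ constructed just above converge to an arbitrary datum of the larger space, and Theorem \ref{Th_1} applies to each of them, so the unique $V\left( Q^{T}\right) $-solvability propagates along the whole dense approximating family.
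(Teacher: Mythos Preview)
Your proposal is correct and mirrors the paper's own treatment: the paper's ``proof'' is precisely the paragraph preceding the Proposition, which asserts the three dense inclusions as well known, builds the approximating sequences $\{p_m\},\{u_{0m}\},\{h_m\}$, and then observes that Theorem~\ref{Th_1} applies to each approximant. You do the same, only supplying more justification for the density claims (factorwise density, lifting to Bochner spaces, and the Leray--Helmholtz argument for $V(\Omega)\subset (H(\Omega))^3$) than the paper, which simply declares them known.
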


\bigskip

\bigskip

\end{document}